\newcommand\blfootnote[1]{%
  \begingroup
  \renewcommand\thefootnote{}\footnote{#1}%
  \addtocounter{footnote}{-1}%
  \endgroup
}
\newcommand{\ideal}[1]{\langle #1\rangle}
\DeclareMathOperator{\ppdim}{ppdim}
\DeclareMathOperator{\size}{dist_p}
\DeclareMathOperator{\dep}{depth}
\DeclareMathOperator{\ind}{ind}
\DeclareMathOperator{\Char}{char}
\theoremstyle{definition}
\newtheorem{definition}{Definition}[section]
\theoremstyle{plain}
\newtheorem{theorem}[definition]{Theorem}
\newtheorem*{theorem*}{Theorem}
\newtheorem{corollary}[definition]{Corollary}
\newtheorem{proposition}[definition]{Proposition}
\newtheorem{lemma}[definition]{Lemma}
\theoremstyle{remark}
\newtheorem{remark}{Remark}
\newtheorem*{remark*}{Remark}
\newtheorem{example}{Example}
\newtheorem*{example*}{Example}
\newtheorem{question}{Question}
\begin{document}
\title{\textbf{Permutation Dimensions of Prime Cyclic Groups}}
\author[1]{Jack Walsh}
\affil[1]{Simion Stoilow Institute of Mathematics of the Romanian Academy}
\date{}
\maketitle{}
\begin{abstract}
\noindent
Based on recent successes concerning permutation resolutions of representations by Balmer and Gallauer we define a new invariant of finite groups: the $p$-permu\-ta\-tion dimension. We define this analogously to the global dimension of a ring by replacing projective resolutions of ring modules with resolutions by $p$-permutation modules of modules over the group ring. We compute this invariant for cyclic groups of prime order.
\end{abstract}
\blfootnote{This work was was supported by the project “Group schemes, root systems, and related representations” founded by the European Union - NextGenerationEU through Romania’s National Recovery and Resilience Plan (PNRR) call no. PNRR-III-C9-2023-I8, Project CF159/31.07.2023, and coordinated by the Ministry of Research, Innovation and Digitalization (MCID) of Romania.}
\section{Introduction}
\label{sec:introduction}
\emph{Throughout we fix a field~$k$ of characteristic $p>0$.
All modules are finitely generated.}
\vspace*{1em}

Let $G$ be a finite group.
Recall that a $k G$-module is a \emph{permutation module} if it admits a $G$-invariant $k$-basis.
More generally, it is a \emph{$p$-permutation module} if it admits a $k$-basis which is invariant under the action of some $p$-Sylow subgroup.
Equivalently, a $p$-permutation module is nothing but a direct summand of a permutation module.

Since a permutation module can be thought of as the equivariant analogue of a free module (i.e. it is free ``up to the action of G'') this last characterization suggests that $p$-permutation modules are equivariant analogues of projective modules
Those, of course, are widely used to `resolve' arbitrary modules and play an important role even in equivariant mathematics.
However, there is a caveat: If $p$ divides the order of~$G$, no non-projective $k G$-module admits a finite projective resolution.
In other words, the projective dimension of a $k G$-module is either~$0$ or~$\infty$: a rather coarse invariant.

\medskip
It has recently been shown that their equivariant analogues fare better~\cite{MR4541331}: Every $k G$-module admits a finite $p$-permutation resolution.
That is, every $k G$-module is built out of $p$-permutation modules in finitely many steps, and it is natural to ask how many steps are necessary.
We thereby hope to obtain a finer measure of the complexity of a $k G$-module.
In particular, taking the supremum over all $k G$-modules we define the \emph{$p$-permutation dimension} of~$G$, and we might hope to learn something about~$G$ by computing this invariant.

It should be said that the proof in~\cite{MR4541331} is non-constructive, and even if one could extract from it $p$-permutation resolutions, there is no reason why these would be of minimal length.
In fact, both constructing such finite resolutions and establishing their minimality seem to be difficult problems in general.
In this article we take some first steps by computing the $p$-permutation dimension for prime cyclic groups.

\medskip
Recall that for the cyclic group~$C_p$ the indecomposable $k C_p$-modules $M_i$ are classified by their dimension $i\in\{1,\ldots,p\}$. Since $C_p$ is a $p$-group the $p$-permutation modules are exactly the permutation modules. Here the indecomposable permutation modules are exactly $M_1$ and $M_p$ (i.e. the trivial module $k$ and the free module $kC_p$).
We compute the permutation dimension of each $M_i$ and show that $\ppdim(\bigoplus_iM_i) = \max\{\ppdim(M_i)\}$ (Theorem~\ref{thrm:ppdim=size}). We thereby arrive at the main result (Corollary~\ref{cor:groupdim}):
\begin{theorem*}
Let $k$ be a field of characteristic $p$. Then $\ppdim_{k}(C_p)= p-2$.
\end{theorem*}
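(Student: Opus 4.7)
The plan is to deduce Corollary~\ref{cor:groupdim} directly from Theorem~\ref{thrm:ppdim=size} together with the per-module computations of $\ppdim(M_i)$ carried out earlier in the paper. By definition, $\ppdim_k(C_p) = \sup_M \ppdim(M)$ over finitely generated $kC_p$-modules, and the Krull--Schmidt theorem writes any such $M$ as a direct sum of copies of the $p$ indecomposables $M_1,\ldots,M_p$. Applying Theorem~\ref{thrm:ppdim=size}, the $\ppdim$ of any such direct sum equals the maximum of the $\ppdim(M_i)$ of the distinct summands appearing, so
\[
\ppdim_k(C_p) \;=\; \max_{1 \le i \le p}\ppdim(M_i),
\]
with the supremum attained (for instance) on $M_1 \oplus \cdots \oplus M_p$.

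It then remains to identify this maximum. The boundary indecomposables $M_1$ and $M_p$ are themselves permutation modules and hence contribute $0$; the content of the corollary is the numerical statement that the $p-2$ interior indecomposables $M_2,\ldots,M_{p-1}$ satisfy $\max_i \ppdim(M_i) = p-2$. This is a direct inspection against the explicit values of $\ppdim(M_i)$ established in the course of proving Theorem~\ref{thrm:ppdim=size}, and sanity-checks against the small cases: for $p=2$ every indecomposable is permutation, giving $\ppdim = 0$; for $p=3$ the sole non-permutation indecomposable is $M_2$, sitting in the short exact sequence $0 \to M_1 \to M_3 \to M_2 \to 0$, so $\ppdim(M_2) = 1 = p-2$.

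No substantive new obstacle is expected at the level of the corollary itself: the difficult ingredients---constructing explicit $p$-permutation resolutions of the $M_i$, verifying their minimality, and establishing the direct-sum formula $\ppdim(\bigoplus_i M_i) = \max_i\ppdim(M_i)$---have all been folded into the preceding theorem, and the corollary is a short bookkeeping step. The only subtlety worth flagging is the reduction from supremum to maximum, which relies on $kC_p$ being of finite representation type so that only finitely many indecomposable isomorphism classes need to be considered; without this, one would have to worry a priori about ever-longer resolutions being forced by ever-larger modules.
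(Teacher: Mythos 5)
Your argument is correct and takes essentially the same route as the paper: both deduce the corollary from Theorem~\ref{thrm:ppdim=size} by reducing the supremum over all modules to the maximum of $\ppdim(M_i)=\size(i)$ over the finitely many indecomposables. The one step you leave as ``direct inspection'' is the verification that this maximum equals $p-2$; the paper makes this explicit via Remark~\ref{rmrk:sizesgraph}, where the recursion $\size(x)=\min\{\size(p-x),\size(p-x+1)\}+1$ produces the chain $1,\,p-1,\,2,\,p-2,\,\ldots,\,\tfrac{p+1}{2}$ of all $p-1$ nontrivial invariants along which $\size$ increases by one, so the maximum $\size\bigl(\tfrac{p+1}{2}\bigr)=p-2$.
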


\section{Definition and Basic Properties}
\label{sec:def-basic}

Let $G$ be any finite group.
For basics on $p$-permutation modules we refer to~\cite[\S\,14]{MR1416328}.
The following is the main notion we will be concerned with in this paper.
\begin{definition}
Let $M$ be a $k G$-module.
\begin{enumerate}
\item A \emph{$p$-permutation resolution} of~$M$ is an exact complex
\[
0\to P_s\to P_{s-1}\to\cdots\to P_0\to M\to 0
\]
in which the $P_i$ are $p$-permutation modules.
We call~$s$ its \emph{length}.
\item The \emph{$p$-permutation dimension} of~$M$ is the minimal length of all $p$-permutation resolutions of $M$.
We denote this by $\ppdim(M)$.
\end{enumerate}
\end{definition}

\begin{remark}
By~\cite{MR4541331}, there are $p$-permutation resolutions of finite length and hence there is a shortest such.
In particular, $\ppdim(M)$ is well-defined.
\end{remark}

\begin{definition}
\label{defn:ppdim}
We define the \emph{$p$-permutation dimension} of~$G$ (over~$k$) as
\[
\ppdim_{k}(G):=\sup_M\ppdim(M)
\]
where $M$ ranges over all $k G$-modules.
This is a non-negative integer or~$\infty$.
\end{definition}

\begin{example}
\begin{enumerate}[(a)]
\item Assume $p$ does not divide the order of~$G$.
Obviously, every $k G$-module is $p$-permutation (even projective).
It follows that $\ppdim_{k}(G)=0$.
\item Let $p=2$ and $G=C_2$ the cyclic group of order~$2$.
Then every $k G$-module is permutation hence again $\ppdim_{k}(C_2)=0$.
Note that in contrast to this, the global dimension of $kC_2$ is infinite since the trivial module~$k$ is non-projective.
\item Let $p=3$ and $G=C_3$ the cyclic group of order~$3$. It is easy to see that $M_2$ admits a permutation resolution of length $1$. Since $M_1$ and $M_3$ are permutation modules, the maximal permutation dimension of an $\textit{indecomposable}$ $kC_3$-module is $1$. The main result will show that this implies that $\ppdim_{k}(C_3)=1$.
\end{enumerate}
\end{example}

Consider the following basic property of $p$-permutation dimensions.

\begin{proposition}
\label{prop:ppdim-sum-product}
Let $M$ and $M'$ be $k G$-modules.
Then we have:
\begin{align}
  \ppdim(M \oplus M') &\leq \max\{\ppdim(M), \ppdim(M')\}
  \label{eq:ppdim-sum}\\
  \ppdim(M \otimes_k M') &\leq \ppdim(M) + \ppdim(M')
  \label{eq:ppdim-product}                          
\end{align}
\end{proposition}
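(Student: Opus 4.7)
The plan is to prove both inequalities by explicit construction from optimal resolutions. Fix $p$-permutation resolutions $P_\bullet \to M$ and $Q_\bullet \to M'$ of lengths $s := \ppdim(M)$ and $t := \ppdim(M')$. Two ingredients are used throughout: (i) the class of $p$-permutation $kG$-modules is closed under direct sums and under tensor products $\otimes_k$ with diagonal $G$-action; and (ii) since $k$ is a field, $-\otimes_k-$ is exact in each variable.

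For inequality \eqref{eq:ppdim-sum}, I would assume without loss of generality that $s\geq t$, pad $Q_\bullet$ with zeros on the left to length $s$, and take the termwise direct sum. This yields an exact complex
\[
0 \to P_s \oplus Q_s \to \cdots \to P_0 \oplus Q_0 \to M \oplus M' \to 0
\]
of length $\max\{s,t\}$, each of whose terms is $p$-permutation by (i).

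For inequality \eqref{eq:ppdim-product}, I would form the double complex $P_i \otimes_k Q_j$ (with diagonal action) supported in the rectangle $0 \leq i \leq s$, $0 \leq j \leq t$, and show that its totalization
\[
C_n := \bigoplus_{i+j=n} P_i \otimes_k Q_j
\]
is a $p$-permutation resolution of $M \otimes_k M'$ of length $s+t$. The terms are $p$-permutation by (i); exactness follows from a standard double-complex/spectral-sequence argument built on (ii) (equivalently, the obstructions would be $\operatorname{Tor}$-groups over the field $k$, which vanish).

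No step looks like a serious obstacle: the proposition is mostly routine homological algebra. The only input specific to $p$-permutation modules is the closure property (i). For direct sums this is immediate from the definition, and for tensor products one invokes the characterization of $p$-permutation modules as summands of permutation modules together with the identification $k[X]\otimes_k k[Y]\cong k[X\times Y]$ as permutation modules under the diagonal $G$-action; a summand of a permutation module being again $p$-permutation then gives the claim.
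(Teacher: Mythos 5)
Your proposal is correct and matches the paper's argument: the paper likewise takes minimal resolutions and forms their degreewise sum, respectively their tensor product (total complex), the latter being exact because $\otimes_k$ over a field is exact. You simply spell out the details (padding, the double complex, and closure of $p$-permutation modules under $\otimes_k$ via $k[X]\otimes_k k[Y]\cong k[X\times Y]$) that the paper leaves implicit.
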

\begin{proof}
Choose a minimal $p$-permutation resolution of~$M$ and~$M'$, respectively.
Their degreewise sum is a $p$-permutation resolution of $M\oplus M'$.
Their tensor product is a $p$-permutation resolution of $M\otimes M'$.
The claim follows immediately.
\end{proof}
Note that when computing the $p$-permutation dimension of $kG$ it suffices to compute only the dimension of indecomposable modules by $(1)$.
\begin{question}
It is natural to ask whether~\eqref{eq:ppdim-sum} and~\eqref{eq:ppdim-product} are equalities.
We answer these questions for \emph{prime cyclic} groups, see Corollary~\ref{corr:cyclicsum}.
\end{question}

\section{Permutation Dimension}
\label{sec:cyclic-p-groups}
For the remainder of this article we assume $G$ is the cyclic group of order $\Char(k) = p$.

Letting $T$ denote the difference between a generator and the identity element we identify
\[
k G=k[T]/T^{p}.
\]
For any $k G$-module~$M$ we have a decreasing filtration by the powers of the radical
\[
M=T^0M\supseteq T^1M\supseteq\cdots\supseteq T^{p}M=0
\]
\begin{definition}
Take $m \in M$.

The \emph{depth of m}, denoted $\dep(m)$, is defined as the maximal $i$ such that $m\in T^iM$ (and $\dep(0)=\infty$).

The \emph{index of nilpotency of $m$ with respect to $T$} is defined as the minimal $i$ such that $T^im = 0$. We denote this by $\ind_T(m)$ and may refer to it simply as the index of $m$.

We also denote by $\ideal{m}\subseteq M$ the $k G$-submodule generated by $m\in M$.
\end{definition}

\begin{remark}
\label{rmk:structure-theorem}
Applying the structure theorem for principal ideal domains to $k[T]$ we may obtain set of generators $\{m_i\}$ of $M$ such that each $\ideal{m_i}$ is a nonzero direct summand of $M$. Moreover, this direct summand is isomorphic to $k[T]/T^{(m)}=:M_{\ind_T(m)}$ and thus such a basis induces a decomposition $M\cong \oplus_iM_{\ind_T(m_i)}$.

We note that the multiset of these indices is independent of the choice of such a basis. These are called the \emph{invariants} of~$M$. It will be important for us to determine when an arbitrary nonzero $m \in M$ generates a direct summand.
\end{remark} 

\begin{lemma}
\label{lemm:summandcriteria}Let $m\in M$ be nonzero and of index $\alpha > 0$. The following are equivalent:
\begin{enumerate}
    \item $m$ generates a direct summand of $M$,
    \item $\dep(T^im) = i$ for all $i < \alpha$,
    \item $\dep(T^{\alpha-1} m) = \alpha - 1$.
\end{enumerate}
\end{lemma}
\begin{proof}
We first show that $1$ implies $2$. Let $M = \langle m \rangle \oplus M'$ and assume for contradiction that for some $i<\alpha$ we have $T^im = T^jx$ with $j > i$ and $x = fm + m' \in M$ with $f \in k[T]$. Now
\begin{align*}
T^{\alpha - 1}m &= T^{\alpha-1 + (j-i)}(fm + m')\\
&= T^{\alpha-1 + (j-i)}m'
\end{align*}
since $(j-i) \geq 1$ and $T^\alpha m = 0$. Hence $T^{\alpha-1}m \in \langle m \rangle \cap M' = \{0\}$. But $T^{\alpha-1}m \neq 0$ by definition and we obtain a contradiction.

The fact that 2 implies 3 is clear. Hence we show that 3 implies 1.

We choose a decomposition $M=\oplus\ideal{m_i}$ where $\ind_T(m_i)=\alpha_i$ and write $m=\sum_if_im_i$ with $f_i\in k[T]$.
We consider $T^{\alpha-1}m = \sum_iT^{\alpha-1}f_im_i$. By assumption there exists $i_0$ such that $\dep(T^{\alpha-1}f_{i_0}m_{i_0}) = \alpha - 1$ and therefore $\dep(f_{i_0}m_{i_0}) = 0$. We see therefore that $f_{i_0} (0) \in k^\times$ and we claim this means that $f_{i_0}$ is invertible in $k[T]/T^{\alpha}$. Indeed, setting $f := f_{i_0}$ we can construct such an inverse $g \in k[T]/T^{\alpha}$ as follows.

Let $f = \sum_{i\geq0}\lambda_iT^i$. By above $\lambda_0 \neq 0$ so we may set $g_0 = 1/\lambda_0$ so that 
\begin{align*}
gf = 1 + \sum_{i\geq 1}\lambda_iT^i.
\end{align*}
We now set 
\begin{align*}
g_1 = g_0 - (\lambda_1/\lambda_0^2)T
\end{align*}
so that 
\begin{align*}
g_1f &= 1 + \sum_{i\geq1}(\lambda_i/\lambda_0 - \lambda_1\lambda_{i-1}/\lambda_0^2)T^i\\
&= 1 + \sum_{i\geq2}\lambda'_iT^i.
\end{align*}
Iterating this product we may produce a $g \in k[T]$ such that 
\begin{align*}
gf = 1 + \sum_{i\geq\alpha}\lambda_i'T^{\alpha}.
\end{align*}

Now define a morphism $\pi\colon M\to \ideal{m_{i_0}}\xrightarrow{g}\ideal{m}$ where the first arrow is the canonical projection and the second sends $m_{i_0}$ to $gm$.
We have $\pi(m)=g fm=m$, establishing the claim.
\end{proof}

We have a slightly stronger criterion for a element of a \emph{permutation module} to generate a direct sum.

\begin{lemma}
\label{lemm:permsplitting}
Let $P$ be a permutation module and let $m \in P$ be nonzero. Then $m$ generates a direct summand iff either 
\begin{itemize}
\item $\dep(Tm) = 1$
\item $\dep(m)=0$ and $\ind_T(m) = 1$.
\end{itemize}
\end{lemma}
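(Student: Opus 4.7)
The plan is to apply Lemma~\ref{lemm:splitting-criterion2} to the decomposition $P\cong M_1^{a}\oplus M_p^{b}$, writing $m=(x_1,\ldots,x_a,y_1,\ldots,y_b)$ componentwise. The key fact to exploit is special to the summands of a permutation module: in a single copy of $M_p=k[T]/T^p$, a nonzero element $y$ with $|y|_{M_p}=0$ automatically has order $p$, and moreover $|T^iy|_{M_p}=i$ for every $i<p$.

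For the forward implication, suppose $m$ generates a direct summand and set $\alpha=\ord(m)$. By Lemma~\ref{lemm:splitting-criterion2} we have $|T^im|=i$ for all $i<\alpha$. If $\alpha=1$ this forces $|m|=0$ and $\ord(m)=1$; otherwise taking $i=1$ gives $|Tm|=1$. Either way we land in one of the two listed cases.

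For the reverse implication, the case $|m|=0$ and $\ord(m)=1$ is immediate from Lemma~\ref{lemm:splitting-criterion2} with $\alpha=1$. So assume $|Tm|=1$. Then $Tm\in TP\setminus T^2P$, which translates via the decomposition into the existence of an index $j_0$ with $Ty_{j_0}\in TM_p\setminus T^2M_p$, equivalently $|y_{j_0}|_{M_p}=0$. By the key fact above, $\ord(y_{j_0})=p$, so $\ord(m)=p$ as well. Moreover for each $i<p$ the $j_0$-component $T^iy_{j_0}$ has degree exactly $i$ inside $M_p$, hence $|T^im|\leq i$; combining with the trivial bound $|T^im|\geq i$ gives $|T^im|=i$ for all $i<\ord(m)$. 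Lemma~\ref{lemm:splitting-criterion2} then yields that $m$ generates a direct summand.

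The only mild obstacle is the passage between degrees measured inside a single indecomposable summand and degrees measured in $P$; this is clean because the decomposition is $T$-equivariant, so the filtration on $P$ is the direct sum of the filtrations on the summands, and the minimum of componentwise degrees equals the degree in $P$. No more is needed, since the hypothesis that $P$ is permutation enters only through the restriction that indecomposable summands are of type $M_1$ or $M_p$, which is precisely what rules out ``high-order components with $|y|>0$'' that would otherwise defeat the argument.
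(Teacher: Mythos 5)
Your proof is correct and follows essentially the same route as the paper's: both decompose $P$ into copies of $M_1$ and $M_p$, use $|Tm|=1$ to locate a component of the form $M_p$ in which $m$ has a unit coefficient, and conclude via the splitting criteria. The only cosmetic difference is that you verify all the conditions $|T^im|=i$ and invoke Lemma~\ref{lemm:splitting-criterion2}, whereas the paper checks only $T^{p-1}m\neq 0$ and invokes Lemma~\ref{lemm:splitting-criterion}.
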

\begin{proof}
The forwards implication follows immediately from Lemma~\ref{lemm:summandcriteria}.

For the reverse implication the second bullet point also follows immediately from Lemma~\ref{lemm:summandcriteria}. Note that this is exactly when $\langle m\rangle \cong k$. 

Thus assume $\ind_T(m) = \alpha > 1$. Since $P$ is a permutation module we can write $P = \bigoplus_r kC_p \bigoplus_sk$. Let $\{m_1, \dots , m_r, m_{r+1}, \dots m_{r+s}\}$ be a set of generators of the summands of $P$ with $\ind_T(m_i) = p$ for $1 \leq i \leq r$ and $\ind_T(m_i) = 1$ otherwise. Let $m = \sum_{i=1}^{r+s}f_i(T)m_i$ so that $Tm = \sum_{i=1}^r Tf_i(T)m_i$. By the assumption that $\dep(Tm) = 1$ there exists some $1\leq j \leq r$ such that $f_j$ has nonzero constant term. Since $m_j$ generates a summand of the form $kC_p$ we have $T^{p-1}m_j \neq 0$ and hence $T^{p-1}m \neq 0$. We claim now that $m$ must generate a direct summand: otherwise there would exist $l > p-1$ and $m' \in P$ such that $T^{p-1}m = T^lm'$ by Lemma~\ref{lemm:summandcriteria}. But $T^lM = 0$ by definition contradicting the fact that $T^{p-1}m \neq 0$.
\end{proof}

We will now define, for an arbitrary $kC_p$-module $M$, a function taking values in $\mathbb{N}$ which depends only on the dimensions of its irreducible submodules. The value of this function will turn out to be exactly equal to the permutation dimensions of $M$.
\begin{definition}
Let $1 \leq x \leq p$. We define the \emph{$p$-distance of an integer $x$}, denoted $\size(x)$, inductively as:
\begin{itemize}
\item $\size(x) = \min\{\size(p-x), \size(p-x+1)\} + 1$ for $2 \leq x \leq p-1$, and
\item $\size(1) = \size(p) = 0$.
\end{itemize}
\end{definition}
\begin{remark}
\label{rmrk:sizesgraph}
One should think of the $p$-distance of $x$ as the number of ``moves'' required to reach 1 from $x$, where each ``move'' consists of passing from $x$ to either $p-x$ or $p-x+1$. For $p>3$ the value $\size(x)$ can be visualised by considering the iteratively constructed diagram
\[
\begin{tikzcd}
1 \ar[r, no head] 
&p-1 \ar[r, no head] 
&2 \ar[r, no head] 
& p-2 \ar[r, no head]
& \dots \ar[r, no head]
&\frac{p-1}{2} \ar[r, no head] 
& \frac{p+1}{2}
\end{tikzcd}
\]
where the $p$-distance strictly increases by $1$ from left to right.  Note that the integer to the right of $x$ is alternately $p-x$ and $p-x+1$.
\end{remark}
\begin{definition}
Let $M$ be a $k G$-module.
We define $\size(M)$ as the maximal $\size(x)$ where~$x$ ranges through the invariants of~$M$.
\end{definition}
Again one should think of this as the ``distance'' of $M$ from a $p$-permutation module. More properly, the relation of this $p$-distance with the permutation dimension is given by the following results.

\begin{proposition}
\label{prop:mainprop}
Let $P$ be a permutation module such that
\begin{align*}
0 \rightarrow K \xrightarrow{g} P  \xrightarrow{f} M \rightarrow 0
\end{align*}
is a short exact sequence. Furthermore assume that $f$ (resp. $g$) does not induce an isomorphism between a direct summand of $P$ and one of $M$ (resp. $K$ and $P$). Then $\size(K) \geq \size(M) - 1$.
\end{proposition}

\begin{proof}
We will prove a slightly stronger result: for every invariant~$c$ of~$M$ there is an invariant~$c'$ of~$K$ such that $\size(c')\geq \size(c)-1$.

We decompose $M$ into direct summands with generators $m_1,\ldots, m_l $ in $M$ and set $x_i=\ind_T(m_i)$. We claim that we can lift this to $\tilde{m}_1,\ldots, \tilde{m}_l\in P$ along~$f$ such that each $\tilde{m}_i$ generates a direct summand of $P$. If $x_i >1$ then any $\tilde{m}_i$ in $f^{-1}(m_i)$ will suffice by Lemma~\ref{lemm:permsplitting}. Hence assume $x_i = 1$ and that $\tilde{m}_i \in f^{-1}(m_i)$ does not generate a direct summand of $P$. By Lemma~\ref{lemm:permsplitting} we can write $T\tilde{m}_i = T^{l}\tilde{m}$ for $l > 1$ and $\tilde{m} \in P$. Set $\tilde{m}'_i = \tilde{m}_i - T^{l-1}\tilde{m}$. We have $T\tilde{m}_i' = 0$ and $f(\tilde{m}_i') = f(\tilde{m}_i)$. Thus $\tilde{m}'_i$ gives the desired lift.

We now claim that each $\tilde{m}_i$ generates a direct summand of $P$ isomorphic to $M_p$. This is equivalent to showing that $\ind_T(\tilde{m}_i) = p$ for all $i$. As $P$ is a permutation module we have $\ind_T(\tilde{m}_i)\in\{1,p\}$. But if $\ind_T(\tilde{m}_i) = 1$ for some $i$ then $Tm_i = f(T\tilde{m}_i) = 0$. Hence $f$ induces an isomorphism from $\langle \tilde{m}_i \rangle$ to $\langle m_i \rangle$, contradicting the assumption. We claim also that $M$ does not have a direct summand of the form $M_p$ by the exact same argument.

Consider then the following commutative diagram: 
\[
\begin{tikzcd}
&
0\ar[d]
&
0\ar[d]
&
0\ar[d]
\\
0 
\ar[r]
&
\oplus_iM_{p-x_i}
\ar[r, "(T^{x_i})"]
\ar[d, "j"]
&
\oplus_iM_{p}
\ar[r]
\ar[d, "(\tilde{m}_i)"]
&
\oplus_iM_{x_i}
\ar[d, "(m_i)"]
\ar[r]
&
0
\\
0
\ar[r]
&
K
\ar[r, "g"]
\ar[d, "\pi"]
&
P
\ar[d]
\ar[r, "f"]
&
M
\ar[d]
\ar[r]
&
0
\\
0
\ar[r]
&
K'
\ar[r]
\ar[d]
&
P'
\ar[r]
\ar[d]
&
0
\\
&
0
&
0
\end{tikzcd}
\]
The top row is the short exact sequence
\[
\begin{tikzcd}
0 
\ar[r]
&
\oplus_iM_{p-x_i}
\ar[r, "(T^{x_i})"]
&
\oplus_iM_{p}
\ar[r]
&
\oplus_iM_{x_i}
\ar[r]
&
0
\end{tikzcd}
\]
where the right hand map is induced by the map $M_p \to M_{x_i}$ sending generator to generator for each $i$. The left hand map is the inclusion of the kernel of the right hand map into $\oplus_i M_p$.

The top middle map vertical map sends the generator of the $i$-th copy of $M_p$ to $\tilde{m}_i$ and similarly the top right vertical map sends the generator of $M_{x_i}$ to $m_i$.  By the first part of the proof the map from $\oplus_i M_p$ to $P$ sends generators of copies of $M_p$ to generators of copies of $M_p$. We should note that the choice of generator is not unique (indeed a generator can be multiplied by any element of $k^\times$ to obtain another one), but it is clear that one can make this choice so that the top right square commutes.

The map $j$ is defined such that the top left square commutes. It is well defined by injectivity of the other maps in this square, and by commutativity of the top right square. The modules $K'$ and $P'$ are such that the vertical sequences are exact, i.e. they are the cokernels of the top left and top middle vertical maps. Exactness of the bottom row follows from the snake lemma.

We note that the middle column is split exact and therefore that $P'$ (and hence $K'$) is a permutation module. Note that we may assume the invariants of $K'$ are all $1$. Indeed if $K'$ has a direct summand of the form $M_p$ then so do $K$ and $P$ by surjectivity of the bottom left and bottom middle maps and projectivity of $M_p$. But then $g$ induces an isomorphism between these two summands, contradicting the assumption.

For $i\in\{1,\ldots,l\}$ write $1_i\in M_{p-x_i}$ for the identity. Let $\dep(j(1_i))=y_i\geq 0$ in $K$ and choose $k_i\in K$ such that $T^{y_i}k_i=j(1_i)$. Note that since the invariants of $K'$ are all $1$, $\pi(Tk_i) = 0$ and hence is in the image of $j$. Thus $y_i \leq 1$ by injectivity of $j$.

If $k_i$ generates a direct summand we have obtained a direct summand of $K$ with $\ind_T(k_i) = \ind_T(1_i) + y_i \in \{p-x_i, p-x_i+1\}$.

Hence suppose that $k_i$ does not generate a direct summand of $K$. By Lemma~\ref{lemm:summandcriteria} there exists $k \in K$ and $l \geq \ind_T(k_i)$ such that $T^{\ind_T(k_i)-1}k_i=T^l k$. As above we have $x\in\oplus_iM_{p-x_i}$ such that $j(x) = Tk$ and hence $j(T^{\ind_T(k_i)-1-y_i}1_i) = j(T^{l-1}x) $. By injectivity we have $T^{\ind_T(k_i)-1-y_i}1_i = T^{l-1}x$. Here if either $y_i = 1$ or $l> \ind_T(k_i)$ we see that $\ind_T(k_i) -1-y_i < l-1$, contradicting the fact that $1_i$ generates a direct summand by Lemma~\ref{lemm:summandcriteria}. In this case we conclude that $k_i$ generates a direct summand.

Hence we consider the case $y_i =0$ and $l = \ind_T(k_i)$. Here we claim that instead $k$ generates a direct summand of $K$. If not, again by Lemma~\ref{lemm:summandcriteria} there exists $k' \in K$ and $l' \geq \ind_T(k)$ such that $T^{\ind_T(k)-1}k = T^{l'}k'$. In particular we have that $\ind_T(k) = l+1$ since $T^{l}k = T^{\ind_T(k_i)-1}k_i$ and again we have $x' \in\oplus_i M_{p-x_i}$ such that $j(x')=Tk'$. Unwinding this we see that $T^{\ind_T(k_i)-1}1_i=T^{l-1}x=T^{l'-1}x'$. Here we have $\ind_T(k_i)-1 <l'-1$ and we obtain the same contradiction as in the $y_i = 1$ case. Note that by assumption $\ind_T(k) = \ind_T(k_i) + 1 = p-x_i + 1$.

In all cases we obtain the fact that $K$ has a summand generated by an element $k_i$ with $\ind_T(k_i) \in \{p-x_i, p-x_i+1\}$. This gives the desired result.
\end{proof}


\begin{theorem}
\label{thrm:ppdim=size}
Let $M$ be a $k G$-module.
Then $\ppdim(M)=\size(M)$.
\end{theorem}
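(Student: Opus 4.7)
The plan is to establish the reverse inequality $\ppdim(M)\leq\size(M)$, since Corollary~\ref{corr:ppdim>size} already gives $\ppdim(M)\geq\size(M)$. The first step is a reduction to the indecomposable case. Directly from the definition of $\size$ we have $\size(M\oplus M')=\max\{\size(M),\size(M')\}$, and Proposition~\ref{prop:ppdim-sum-product}\eqref{eq:ppdim-sum} gives the same $\max$-inequality for $\ppdim$, so it suffices to prove $\ppdim(M_x)\leq\size(x)$ for each $1\leq x\leq p$.

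I would then induct on $\size(x)$. The base case $\size(x)=0$ forces $x\in\{1,p\}$, in which case $M_x$ is already a permutation module. For the inductive step, I construct two permutation resolutions of $M_x$ of length~$1$, one for each of the terms in the $\min$ defining $\size(x)$. The first is the obvious surjection $kG=M_p\twoheadrightarrow M_x$, $1\mapsto m$, whose kernel is $T^x\cdot kG\cong M_{p-x}$. The second, which is the key construction, is a map $M_p\oplus M_1\twoheadrightarrow M_x$ sending the generator of $M_p$ to $m$ and the generator of $M_1$ to $T^{x-1}m$; a direct check shows that the element $(-T^{x-1},1)$ lies in the kernel and $T$-generates a cyclic submodule of order $p-x+1$, and by a dimension count this submodule exhausts the kernel, so the kernel is $\cong M_{p-x+1}$.

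Choosing whichever of the two resolutions has kernel of smaller $\size$ and applying the inductive hypothesis yields
\[
\ppdim(M_x)\leq 1+\min\{\size(p-x),\,\size(p-x+1)\}=\size(x),
\]
closing the induction.

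I expect the main obstacle to be the construction of the second short exact sequence whose kernel is cyclic of order $p-x+1$: the naive choice $M_p\twoheadrightarrow M_x$ only produces $M_{p-x}$, and one must augment the regular representation by a permutation summand $M_1$ in just the right way to stretch the kernel by one degree into $M_{p-x+1}$. Once this construction is identified, the induction matches the recursion $\size(x)=\min\{\size(p-x),\size(p-x+1)\}+1$ exactly, and the reduction to indecomposables via Proposition~\ref{prop:ppdim-sum-product} is standard.
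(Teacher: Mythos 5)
Your proposal is correct and follows essentially the same route as the paper: the paper likewise constructs, for each indecomposable summand $M_{x_i}$ with $1<x_i<p$, either the cover $M_p\twoheadrightarrow M_{x_i}$ or the augmented cover $M_p\oplus M_1\twoheadrightarrow M_{x_i}$ (according to which branch of the $\min$ in the definition of $\size$ is achieved), obtains a kernel isomorphic to $M_{x_i'}$ with $\size(x_i')=\size(x_i)-1$, and closes the induction by splicing. The only cosmetic difference is that you reduce to indecomposables first via Proposition~\ref{prop:ppdim-sum-product}, whereas the paper treats all summands simultaneously in one short exact sequence.
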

\begin{proof}
We first show that $\ppdim(M) \geq \size(M)$. To do this we induct on $s=\ppdim(M)$.
If $s=0$ then $M$ is a permutation module and $\size(M)=0$.
Now assume $s>0$.
Consider a minimal permutation resolution
\[
0\to P_s\xrightarrow{f_s}\cdots \to P_0\xrightarrow{f_0} M\to 0.
\]
By Remark 2.11 in \cite{BALMER2022108535} we may assume that this resolution is an indecomposable chain complex. In particular, letting $K_0=\ker(f_0)$, we have that the short exact sequence
\begin{align*}
0 \rightarrow K_0 \rightarrow P_0  \rightarrow M \rightarrow 0
\end{align*}
satisfies the assumptions of  Proposition~\ref{prop:mainprop}. Applying this we get $\size(K_0)\geq \size(M)-1$.
By induction, $s-1=\ppdim(K_0)\geq \size(K_0)\geq \size(M)-1$ so that $s\geq \size(M)$, as required.

We now show that $\ppdim(M)\leq \size(M)$.
We prove this by induction on~$\size(M)$.
If $\size(M)=0$ then $M$ is a permutation module and hence $\ppdim(M)=0$ as well.

Now assume $\size(M)=s+1>0$.
Fix a direct sum decomposition $M=\oplus_i M_{x_i}$, where $\ind_T(m_i)=x_i$.

If $x_i \in\{1,p\}$ we set $P(i):= M_{x_i}$ together with the map $P(i)\to M$ that sends~$1$ to~$m_i$.

Otherwise we may write $x_i=p+\varepsilon_i-x_i'$ where $\varepsilon_i\in\{0,1\}$, $p> x_i> 1$ and $\size(x_i')= \size(x_i)-1$. Define a map $M_{p}\to M_{x_i}$ by sending~$1$ to~$m_i$.
This is well-defined because $p> x_i$.
If $\varepsilon_i=1$ also define $M_{1}\to M_{x_i}$ by sending~$1$ to~$T^{x_i-1}m_i$.This is again well-defined because $x_i > 1$.
If $\varepsilon_i=0$ we set $P(i):= M_{p}$ and if $\varepsilon_i=1$  we set $P(i):=M_{p}\oplus M_{1}$.

In any case, we have a permutation module $P(i)$ together with a morphism $f(i)\colon P(i)\to M_{x_i}$ that surjects onto $\ideal{m_i}$.
If $x_i$ is a $p$-power then $\ker(f(i))=0$ and otherwise $\ker(f(i))\cong M_{x_i'}$ so that $\size(\ker(f(i)))= \size(x_i)-1$.
We then have a short exact sequence
\begin{equation}
\label{eq:ses}
0\to K:=\oplus_iM_{x_i'}\to P:=\oplus_i P(i)\to M\to 0
\end{equation}
where $P$ is a permutation module and $K$ has $p$-size~$s$.
By induction $\ppdim(K)= s$ so there exists a permutation resolution of~$K$ of length~$s$.
Splicing it together with~\eqref{eq:ses} gives a permutation resolution of~$M$ of length~$s+1$ so that $\ppdim(M)\leq s+1=\size(M)$.
\end{proof}

Theorem~\ref{thrm:ppdim=size} gives the permutation dimension of any $kG$-module $M$ in terms of the dimensions of its indecomposable summands. From this we obtain some immediate corollaries.

\begin{corollary}
\label{cor:groupdim}
Let $k$ be a field of characteristic $p$. Then $\ppdim_{k}(C_p)= p-2$.
\end{corollary}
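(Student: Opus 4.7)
The plan is to reduce the group-level invariant to a finite combinatorial problem via Theorem~\ref{thrm:ppdim=size} and then evaluate it using Remark~\ref{rmrk:sizesgraph}.

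First I would combine Theorem~\ref{thrm:ppdim=size} with the structure theorem (Remark~\ref{rmk:structure-theorem}). Every $kC_p$-module decomposes as $\bigoplus_i M_{x_i}$ with $x_i \in \{1, \ldots, p\}$, and by definition $\size$ of such a direct sum is the maximum of the $\size(x_i)$. Hence
\[
\ppdim_{k}(C_p) = \sup_M \ppdim(M) = \sup_M \size(M) = \max_{1 \leq x \leq p} \size(x),
\]
reducing the statement to the combinatorial claim $\max_{1 \leq x \leq p} \size(x) = p - 2$.

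Second, I would appeal to Remark~\ref{rmrk:sizesgraph}: its chain visits every element of $\{1, \ldots, p-1\}$ and $\size$ strictly increases by one from right to left, starting from $\size(1) = 0$. Since the chain has $p - 1$ vertices, its leftmost vertex $\tfrac{p+1}{2}$ has size $p-2$, and since $\size(p) = 0$ as well, this is the maximum over $\{1, \ldots, p\}$. Combined with the first step this gives $\ppdim_{k}(C_p) = p - 2$. The small cases $p \in \{2, 3\}$, where the chain degenerates, follow by direct inspection.

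The main obstacle is justifying the remark's informal assertion that $\size$ strictly increases along the chain. The cleanest route is to recognize $\size(x)$ as the graph distance from $x$ to $\{1, p\}$ in the graph $\Gamma$ on $\{1, \ldots, p\}$ with edges $\{x, p - x\}$ and $\{x, p - x + 1\}$ for $2 \leq x \leq p - 1$, which is the unique minimal solution to the recursion. A short degree count then shows that $p$ is isolated, that $1$ and $\tfrac{p+1}{2}$ have degree one, and that every other vertex in $\{1, \ldots, p-1\}$ has degree two; this forces the restriction of $\Gamma$ to $\{1, \ldots, p-1\}$ to be a simple path of length $p - 2$ with endpoints $1$ and $\tfrac{p+1}{2}$. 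Strict monotonicity of the distance from an endpoint along such a path then yields the required property.
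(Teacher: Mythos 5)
Your proposal is correct and follows essentially the same route as the paper: reduce via Theorem~\ref{thrm:ppdim=size} to computing $\max_x \size(x)$ and then read the answer off the chain in Remark~\ref{rmrk:sizesgraph}. The paper dismisses the combinatorial evaluation with ``it is easy to see''; your graph-distance justification simply fills in that step (and is sound, since the explicit chain shows the path component containing $1$ exhausts $\{1,\ldots,p-1\}$).
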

\begin{proof}
From Theorem~\ref{thrm:ppdim=size} we see that $\ppdim_{k}(C_p) = \max\{\size(x)\}$ where $1 \leq x \leq p-1$. It is easy to see that this is exactly $\size(\frac{p+1}{2}) = p-2$ (see Remark~\ref{rmrk:sizesgraph}).
\end{proof}

We recall from Proposition~\ref{prop:ppdim-sum-product} that the permutation dimension of a direct sum is bounded above by the maximum of the dimensions of the summands. Theorem~\ref{thrm:ppdim=size} tells us that this is an equality when $G = C_p$.

\begin{corollary}
\label{corr:cyclicsum}
Let $M$ and $N$ be $kC_p$-modules. Then
\begin{align*}
\ppdim(M \oplus N) = \max\{\ppdim(M), \ppdim(N)\}.
\end{align*}
\end{corollary}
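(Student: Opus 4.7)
The plan is to combine Theorem~\ref{thrm:ppdim=size} with a direct observation about how $\size$ behaves on direct sums. The inequality $\ppdim(M\oplus N)\leq\max\{\ppdim(M),\ppdim(N)\}$ is already recorded in Proposition~\ref{prop:ppdim-sum-product}, so the only content lies in the reverse inequality.

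For the reverse inequality, I would first translate everything from $\ppdim$ into $\size$ using Theorem~\ref{thrm:ppdim=size}: it suffices to show
\[
\size(M\oplus N)\geq\max\{\size(M),\size(N)\}.
\]
But the invariants of $M\oplus N$ are (by uniqueness of the invariants in Remark~\ref{rmk:structure-theorem}) simply the multiset union of the invariants of $M$ and those of $N$. Since $\size(M)$ is defined as the maximum of $\size(x)$ over invariants $x$ of $M$, taking the maximum over a larger multiset can only make it at least as large. In fact one gets the equality $\size(M\oplus N)=\max\{\size(M),\size(N)\}$ on the nose.

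Putting the two pieces together,
\[
\ppdim(M\oplus N)=\size(M\oplus N)=\max\{\size(M),\size(N)\}=\max\{\ppdim(M),\ppdim(N)\},
\]
which is the claimed equality. There is no real obstacle here: the whole statement is an immediate bookkeeping consequence of Theorem~\ref{thrm:ppdim=size} once one notes the additivity of the invariants on direct sums. The only thing worth being careful about is not to conflate ``invariants of $M$'' with ``dimensions of indecomposable summands'', but in our setting $G=C_p$ these are the same by Remark~\ref{rmk:structure-theorem}.
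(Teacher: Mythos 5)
Your argument is correct and is essentially identical to the paper's own proof: both reduce via Theorem~\ref{thrm:ppdim=size} to the observation that $\size(M\oplus N)=\max\{\size(M),\size(N)\}$, which follows from the invariants of a direct sum being the union of the invariants of the summands. You simply spell out the bookkeeping in slightly more detail.
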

\begin{proof}
It is clear from the definition that $\size(M \oplus N) = \max \{\size(M), \size(N)\}$. This gives the desired result.
\end{proof}

On the other hand note that the corresponding inequality for tensor products cannot be an equality for $p \geq 3$. This follows immediately from the fact that $\ppdim_{k}(C_p)= p-2$ is non-zero and bounded.

\section*{Acknowledgements}
I would like to give thanks to my master's degree supervisor Martin Gallauer, both for providing such an interesting yet approachable problem for my thesis (on which this article is based) and for many insightful and enjoyable discussions throughout the project and beyond.
\bibliographystyle{vancouver}
\bibliography{bibliography.bib}
\end{document}